\documentclass[12pt]{article}
\usepackage[latin1]{inputenc}
\usepackage{geometry}
\usepackage{amsfonts}
\usepackage{amsmath}
\usepackage{amssymb}
\usepackage{amsthm}
\usepackage{graphics}

\newcommand{\Q}{\mathbb{Q}}

\newcommand{\uni}{\cup}

\newcommand{\prc}[1]{\ensuremath{\mathsf{#1}}}

\newcommand{\SP}{\prc{\#P}}

\newcommand{\vc}[1]{\bar{#1}}  

\newcommand{\ee}{\tilde{e}}

\renewcommand{\AA}{\tilde{A}}
\newcommand{\BB}{\tilde{B}}
\newcommand{\Ge}{G_{ee}}

\newcommand{\subs}{\subseteq}

\newcommand{\vduni}{\sqcup}
\newcommand{\kc}{k_{cov}}
\newcommand{\yy}{\tilde{y}}
\newcommand{\vv}{\tilde{v}}
\newcommand{\GG}{\tilde{G}}

\newtheorem{defi}{Definition}

\newtheorem{lem}[defi]{Lemma}
\newtheorem{thm}[defi]{Theorem}


\begin{document}

\title{A Most General Edge Elimination Polynomial --- Thickening of Edges}

\author{Christian Hoffmann}

\maketitle

\begin{abstract}
  We consider a graph polynomial $\xi(G;x,y,z)$ introduced by
  Averbouch, Godlin, and Makowsky (2007). This graph polynomial
  simultaneously generalizes the Tutte polynomial as well as a
  bivariate chromatic polynomial defined by Dohmen, P\"onitz and
  Tittmann (2003). We derive an identity which relates the graph
  polynomial of a thicked graph (i.e.\ a graph with each edge replaced
  by $k$ copies of it) to the graph polynomial of the original graph.
  As a consequence, we observe that at every point $(x,y,z)$, except
  for points lying within some set of dimension~$2$, evaluating $\xi$
  is \SP-hard.
\end{abstract}

\section{Introduction}

We consider the following three-variable graph polynomial which has
been introduced by I.~Averbouch, B.~Godlin, and J.~A.~Makowsky
\cite{agm_most_general_edge_elimination}:
\begin{equation}
  \label{eq:xi_def}
  \xi(G;x,y,z) = \sum_{
    (A \sqcup B) \subseteq E}
  x^{k(A\uni B)-\kc(B)}\cdot y^{|A|+|B|-\kc(B)}\cdot
  z^{\kc(B)},
\end{equation}
where $G=(V,E)$ is a graph with multiple edges and self loops allowed,
$A\vduni B$ denotes a \emph{vertex-disjoint} union of edge sets $A$
and $B$, $k(A\uni B)$ is the number of components of $(V,A\uni B)$,
and $\kc(B)$ is the number of components of $(V(B),B)$.

The polynomial $\xi$ simultaneously generalizes two interesting graph
polynomials: the Tutte polynomial and a bivariate chromatic polynomial
$P(G;x,y)$ defined by K.~Dohmen, A.~P\"onitz, and P.~Tittmann
\cite{dpt_chromatic}.

It is known that the Tutte polynomial of a graph with ``thicked''
edges evaluated at some point equals the Tutte polynomial of the
original graph evaluated at another point (parallel edge reduction).
This property can be used to prove that at almost every point
evaluating the Tutte polynomial is hard \cite{jaeger_vertigan_welsh,
  sokal-2005, blaes_mak, bdm}.

In Section~\ref{sec:p2p} of this note we observe that edge thickening
has a similar effect on $\xi$ as on the Tutte polynomial
(Theorem~\ref{thm:p2p}). In Section~\ref{sec:xi_hard} we conclude that
for every point $(x,y,z) \in \Q^3$, except on a set of dimension at
most 2, it is \SP-hard to compute $\xi(G;x,y,z)$ from $G$
(Theorem~\ref{thm:xi_hard}). This supports a difficult point
conjecture for graph polynomials \cite[Conjecture 1]{makowsky_zoo},
\cite[Question 1]{agm_most_general_edge_elimination}.

\section{A point-to-point reduction from thickening}
\label{sec:p2p}

In this section we apply Sokal's approach to $\xi$ and obtain
Lemma~\ref{lem:psi_thick} (cf.\ \cite[Section 4.4]{sokal-2005}), the
main technical contribution of this note.

We define the following auxiliary polynomial, which has a different
$y$-variable for each edge of the graph, $\vc{y}=(y_e)_{e\in E(G)}$.
\begin{equation}
  \label{eq:xi_def}
  \psi(G;x,\vc{y},z) = \sum_{(A\vduni B) \subs E(G)} w(G;x,\vc{y},z;A,B),
\end{equation}
where
\[ w(G;x,\vc{y},z;A,B)=x^{k(A\uni B)}\big(\prod_{e\in (A\uni B)} y_e)
z^{\kc(B)}.\] We write $\psi(G;x,y,z)$ for $\psi(G;x,\vc{y}, z)$ if
for each $e\in E(G)$ we have $y_e=y$.

\begin{lem}
\label{lem:psi_xi}
We have the polynomial identities
$\psi(G;x,y,zx^{-1}y^{-1})=\xi(G;x,y,z)$ and
$\xi(G;x,y,zxy)=\psi(G;x,y,z)$.\qed
\end{lem}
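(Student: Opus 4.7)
The plan is to verify both identities by a direct term-by-term comparison of summands. Both $\xi$ and $\psi$ are defined as sums indexed by the same set, namely pairs of edge sets $(A,B)$ with $A \vduni B \subs E(G)$, so it suffices to show that after the prescribed substitution the summand of one polynomial coincides with the summand of the other for each such pair $(A,B)$.

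First I would specialize the definition of $\psi(G;x,\vc{y},z)$ to the case $y_e = y$ for every $e \in E(G)$. The product $\prod_{e \in A \uni B} y_e$ then collapses to $y^{|A|+|B|}$, yielding
\[ \psi(G;x,y,z) = \sum_{(A\vduni B) \subs E(G)} x^{k(A\uni B)}\, y^{|A|+|B|}\, z^{\kc(B)}. \]
To establish the first identity, substitute $z \mapsto z x^{-1} y^{-1}$: the factor $z^{\kc(B)}$ becomes $x^{-\kc(B)} y^{-\kc(B)} z^{\kc(B)}$, and combining with $x^{k(A\uni B)} y^{|A|+|B|}$ gives precisely $x^{k(A\uni B) - \kc(B)} y^{|A|+|B| - \kc(B)} z^{\kc(B)}$, which is the summand of $\xi$. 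The second identity can be obtained either by a completely parallel calculation (substitute $z \mapsto zxy$ into $\xi$ and observe that the factor $(zxy)^{\kc(B)}$ absorbs the negative contributions in the exponents of $x$ and $y$), or, more economically, by substituting $z \mapsto zxy$ into the first identity, which directly yields the second.

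There is essentially no obstacle here; the identities are built into the matching structure of the two definitions, and the substitutions are chosen precisely so that the $\kc(B)$-exponent of $z$ accounts for the missing factors of $x$ and $y$. The only minor point worth noting is that the substitutions $z \mapsto zx^{-1}y^{-1}$ and $z \mapsto zxy$ pass through Laurent expressions at intermediate stages; the final forms are honest polynomials because $\kc(B) \leq k(A\uni B)$ (every component of $(V(B),B)$ is contained in a component of $(V,A\uni B)$) and $\kc(B) \leq |B| \leq |A|+|B|$, so the exponents of $x$ and $y$ appearing in $\xi$ are nonnegative.
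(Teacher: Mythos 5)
Your verification is correct and is exactly the intended argument: the paper states this lemma with the proof omitted (marked as immediate), since the two definitions differ summand-by-summand only by the factor $(xy)^{\kc(B)}$, which is precisely what your substitution computation shows. Your closing remark on nonnegativity of the exponents is a harmless bonus; note that the injectivity of the map from components of $(V(B),B)$ to components of $(V,A\uni B)$ (hence $\kc(B)\leq k(A\uni B)$) ultimately rests on $A$ and $B$ being vertex-disjoint, though nothing in the identity itself depends on this side remark.
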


Let $G$ be a graph and $e\in E(G)$ an edge. Let $E':=E\setminus \{e\}$
and $G_{ee}$ be the graph $G$ with $e$ doubled, i.e.\
$G_{ee}=(V(G),E'\uni\{e_1,e_2\})$ with $e_1,e_2$ being new edges.

\begin{lem}
\label{lem:psi_thick}
  $\psi(G_{ee};x,\vc{y},z)=\psi(G;x,\vc{Y}, z)$ with
  $Y_e=(1+y_{e_1})(1+y_{e_2})-1$ and $Y_{\ee}=y_{\ee}$ for all $\ee
  \in E'$.
\end{lem}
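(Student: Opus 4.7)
The plan is to expand $\psi(G_{ee};x,\vc{y},z)$ by partitioning its index set. First I would fix restrictions $A' := A \cut E'$ and $B' := B \cut E'$, and then sum over the possible intersections of $A$ and $B$ with $\{e_1, e_2\}$. I then group these $e_1, e_2$-configurations by their ``image'' in $G$: declare that $e$ lies in $A_G$ (resp.\ $B_G$) if and only if at least one of $e_1, e_2$ lies in $A$ (resp.\ $B$). The goal is to show that summing within each group yields exactly one term of $\psi(G;x,\vc{Y},z)$ with the modified edge weight $Y_e$ on $e$.

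Two observations drive the calculation. First, configurations with one of $e_1, e_2$ in $A$ and the other in $B$ are forbidden: since $e_1$ and $e_2$ share the endpoints $\{u,v\}$ of $e$, such a configuration would place $\{u,v\}$ into $V(A) \cut V(B)$, violating the vertex-disjointness of $A \vduni B$. Hence the allowed configurations split into a ``neither'' group (one configuration), an ``$e \in A_G$'' group (three configurations: $e_1$ only, $e_2$ only, or both in $A$), and symmetrically an ``$e \in B_G$'' group. Second, $e_1$ and $e_2$ are parallel edges in $G_{ee}$, so replacing any nonempty subset of $\{e_1,e_2\}$ by the single edge $e$ changes neither $k$ nor $\kc$. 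Consequently, within each group the factors $x^{k(A \uni B)}$ and $z^{\kc(B)}$ coincide with the $G$-side factors for the image $(A_G, B_G)$, and the vertex-disjointness constraint in $G_{ee}$ matches the one in $G$ (because $V(e_1) = V(e_2) = V(e)$).

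It then remains to sum the $e_1, e_2$-contributions to $\prod_{f \in A \uni B} y_f$ within each group. The ``neither'' group contributes $1$, matching the $G$-side weight when $e \notin A_G \uni B_G$. Each of the ``$e \in A_G$'' and ``$e \in B_G$'' groups has three allowed configurations contributing $y_{e_1}$, $y_{e_2}$, and $y_{e_1} y_{e_2}$, summing to $(1+y_{e_1})(1+y_{e_2}) - 1 = Y_e$, exactly the edge weight on $e$ in $\psi(G;x,\vc{Y},z)$. Combining this with the first paragraph identifies the expansion term-by-term with the right-hand side. The main obstacle I anticipate is the careful bookkeeping of vertex-disjointness through the grouping; once that is handled, the proof is a direct expansion and remains valid when $e$ is a self-loop, in which case $V(e_1) = V(e_2) = V(e)$ is a single vertex and the same analysis applies.
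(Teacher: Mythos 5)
Your proposal is correct and is essentially the paper's own argument: your grouping of the $G_{ee}$-configurations by their ``image'' in $G$ is exactly the paper's map $\tau$ read in the opposite direction, with the same three cases ($e$ in neither, in $A$, or in $B$). You merely make explicit the details the paper compresses into ``Observe'' and ``Calculation yields'' --- that mixed configurations ($e_1\in A$, $e_2\in B$) are excluded by vertex-disjointness, that $k$ and $\kc$ are unchanged under replacing a nonempty subset of $\{e_1,e_2\}$ by $e$, and that $y_{e_1}+y_{e_2}+y_{e_1}y_{e_2}=Y_e$.
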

\begin{proof}
  Let $M(G)=\{(A,B)\ |\ A\vduni B \subs E(G)\}$ and $M(G_{ee}) =
  \{(\AA, \BB)\ |\ \AA\vduni \BB \subs E(G_{ee}\}$. We define a map
  $\tau:M(G) \to 2^{M(\Ge)}$ in the following way. Consider $(A,B)\in
  M(G)$. If $e\not \in A\uni B$, we set $\tau(A,B)=\{(A,B)\}$. If
  $e\in A$, we let $A':=A\setminus \{e\}$ and define
  $\tau(A,B)=\{(A'\uni \{e_1\}, B),(A'\uni \{e_2\},B),(A'\uni
  \{e_1,e_2\},B)\}$. (Note that in this case $e\not \in B$, as $A$ and
  $B$ are vertex-disjoint.) If $e \in B$, we let $B':=B\setminus
  \{e\}$ and define $\tau(A,B)=\{(A, B'\uni \{e_1\}), (A,B'\uni
  \{e_2\}), (A,B'\uni\{e_1,e_2\})\}$. Observe that
  \begin{equation}
    \label{eq:MGe_part}
    M(\Ge)=\uni_{(A,B)\in M(G)} \tau(A,B),
  \end{equation} and that this union is a
  union of \emph{pairwise disjoint} sets.

  Calculation yields
  \begin{equation}
  \label{eq:sum2subst}
  w(G;x,\vc{Y},z;A,B) =
  \sum_{(\AA,\BB)\in\tau(A,B)}w(\Ge,x,\vc{y},z;\AA,\BB)
  \end{equation}
  for every $(A,B)\in M(G)$. Thus,
  \begin{align*}
    \psi(\Ge;x,\vc{y},z) &= \sum_{(\AA,\BB)\in M(\Ge)}w(\Ge;x,\vc{y},z;\AA,\BB) \\
    &=\sum_{(A,B)\in M(G)}\sum_{(\AA,\BB)\in\tau(A,B)}w(\Ge;x,\vc{y},z;\AA,\BB) && \text{by \eqref{eq:MGe_part}}\\
    &=\sum_{(A,B)\in M(G)}w(G;x,\vc{Y},z;A,B) && \text{by \eqref{eq:sum2subst}}\\
    &=\psi(G;x,\vc{Y},z).
  \end{align*}
\end{proof}

Applying Lemma~\ref{lem:psi_thick} repeatedly and
Lemma~\ref{lem:psi_xi} to convert between $\psi$ and $\xi$ we obtain
\begin{thm}
\label{thm:p2p}
  Let $G_k$ be the $k$-thickening of $G$ (i.e.\ the graph obtained out
  of $G$ by replacing each edge by $k$ copies of it). Then
  \begin{align}
    \label{eq:psi_p2p}
  \psi(G_k;x,y,z)&=\psi(G;x,(1+y)^k-1,z), \\    
  \xi(G_k;x,y,z)&=\xi\Big(G;x,(1+y)^k-1,z\frac{(1+y)^k-1}{y}\Big).
  \end{align}
\end{thm}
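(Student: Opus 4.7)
The plan is to derive the identity \eqref{eq:psi_p2p} by iterating Lemma~\ref{lem:psi_thick} and then deduce the $\xi$-identity via Lemma~\ref{lem:psi_xi}. A direct induction on $k$ in which all parallel copies of an edge share a single variable $y$ fails, because one application of Lemma~\ref{lem:psi_thick} already destroys that symmetry on the right-hand side. Accordingly, I would strengthen the target statement to carry independent $y$-variables on the parallel copies and only specialize to a common value at the end.

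Concretely, fix one edge $e \in E(G)$ and write $H_j$ for the graph obtained from $G$ by replacing $e$ with $j$ parallel copies $e_1,\dots,e_j$ (leaving all other edges untouched). I would prove by induction on $j$ that
\[ \psi(H_j;x,\vc{y},z)=\psi(G;x,\vc{Y},z), \qquad Y_e=\prod_{i=1}^{j}(1+y_{e_i})-1, \]
with $Y_{\ee}=y_{\ee}$ for every other edge $\ee$. In the inductive step, one views $H_{j+1}$ as $H_j$ with the edge $e_j$ doubled into $e_j,e_{j+1}$; Lemma~\ref{lem:psi_thick} collapses $H_{j+1}$ back to $H_j$ with the variable on $e_j$ replaced by $(1+y_{e_j})(1+y_{e_{j+1}})-1$, and the inductive hypothesis applied to $H_j$ with that new variable finishes the step. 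The product telescopes thanks to the identity $1+\bigl((1+y_{e_j})(1+y_{e_{j+1}})-1\bigr)=(1+y_{e_j})(1+y_{e_{j+1}})$.

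Because Lemma~\ref{lem:psi_thick} only rewrites the $y$-variable of the edge being doubled, the same procedure applied in turn to each edge of $G$ produces $G_k$ on the left and replaces every original edge-variable by $\prod_{i=1}^{k}(1+y_{e_i})-1$ on the right. Setting every edge-variable equal to $y$ then yields \eqref{eq:psi_p2p}. For the $\xi$-identity I would use Lemma~\ref{lem:psi_xi} to translate: write $\xi(G_k;x,y,z)=\psi(G_k;x,y,zx^{-1}y^{-1})$, apply \eqref{eq:psi_p2p} to obtain $\psi(G;x,(1+y)^k-1,zx^{-1}y^{-1})$, and then convert back using the second part of Lemma~\ref{lem:psi_xi} with $y$ replaced by $(1+y)^k-1$. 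The third argument becomes $zx^{-1}y^{-1}\cdot x\cdot((1+y)^k-1)=z\,\frac{(1+y)^k-1}{y}$, as required.

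I do not expect a real obstacle: the only subtlety is recognizing that the induction must carry independent edge-variables rather than a single common $y$, because otherwise the intermediate graphs fall outside the hypothesis of Lemma~\ref{lem:psi_thick}. Once that strengthening is made, the remainder of the argument is a straightforward iteration plus the algebraic substitutions from Lemma~\ref{lem:psi_xi}.
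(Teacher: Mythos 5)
Your proof is correct and follows essentially the same route as the paper: repeated application of Lemma~\ref{lem:psi_thick} (whose multivariate formulation with independent edge-variables $\vc{y}$ is exactly what makes the iteration go through), followed by specializing all variables to $y$ and converting between $\psi$ and $\xi$ via Lemma~\ref{lem:psi_xi}. The paper leaves these details implicit, and your inductive bookkeeping with the telescoping product and the final substitution $z x^{-1}y^{-1}\cdot x\cdot\bigl((1+y)^k-1\bigr)=z\frac{(1+y)^k-1}{y}$ fills them in correctly.
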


\section{Hardness}
\label{sec:xi_hard}

The following theorem has been proven independently by I.~Averbouch
(J.~A.~Makowsky, personal communication, October 2007).
\begin{thm}
\label{thm:DPT_hard}
Let $P$ denote the bivariate chromatic polynomial defined by
K.~Dohmen, A.~P\"onitz, and P.~Tittmann \cite{dpt_chromatic}. For
every $(x,y)\in \Q$, $y\neq 0$, $(x,y)\not \in \{(1,1),(2,2)\}$, it is
\SP-hard to compute $P(G;x,y)$ from $G$.
\end{thm}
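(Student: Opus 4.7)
My plan is to specialize the edge-thickening identity of Theorem~\ref{thm:p2p} to the polynomial $P$, and then use polynomial interpolation to Turing-reduce from the \SP-hard problem of evaluating the ordinary chromatic polynomial at an integer $n\geq 3$. Since $P$ is a specialization of $\xi$, substituting the DPT-to-$\xi$ change of variables into Theorem~\ref{thm:p2p} should yield an identity of the shape
\[
P(G_k;x,y)=\pi_k(x,y)\cdot P\bigl(G;X_k(x,y),Y_k(x,y)\bigr),
\]
where $G_k$ is the $k$-thickening of $G$ and $\pi_k,X_k,Y_k$ are explicit functions depending polynomially or rationally on $k$ and $(x,y)$, inherited from the transformation $y\mapsto(1+y)^k-1$ in Theorem~\ref{thm:p2p}.

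Given an oracle for $P(\cdot;x_0,y_0)$ at a fixed point with $y_0\neq 0$ and $(x_0,y_0)\notin\{(1,1),(2,2)\}$, I would apply it to $G_k$ for $k=1,2,\ldots,m$ with $m$ polynomial in $|V(G)|$. Dividing by the known prefactors $\pi_k(x_0,y_0)$ yields the values $P(G;X_k,Y_k)$ at many points of the orbit. Treating $P(G;\cdot,\cdot)$ as a polynomial in the variable that actually moves under thickening and applying Lagrange interpolation, I can recover evaluations of $P(G;\cdot)$ at auxiliary points, in particular at $(n,n)$ for some chosen integer $n\geq 3$. Since $P(G;n,n)$ coincides with the ordinary chromatic polynomial of $G$ at $n$, \SP-hardness of the oracle then follows from Linial's theorem.

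The main obstacle is verifying that the orbit $\{(X_k,Y_k)\}_{k\geq 1}$ truly allows interpolation to reach some hard target $(n,n)$ from every admissible starting point. I expect the exceptional conditions to correspond precisely to degeneracies of the thickening map: $y_0=0$ collapses the prefactor and the denominator $((1+y)^k-1)/y$, while $(1,1)$ and $(2,2)$ should be fixed points of the induced substitution, so that $(X_k,Y_k)=(x_0,y_0)$ for all $k$ and no new information is produced. Carrying out the case distinction that rules out hardness only at these points will be the technical heart of the argument; for corner cases where thickening alone yields only a one-dimensional family of reachable images, I anticipate needing to supplement it with a secondary reduction (vertex cloning, or substitution of small gadgets in place of edges) to decouple the movement in $x$ from that in $y$ and so cover all of $\Q^2$ outside the claimed exceptional set.
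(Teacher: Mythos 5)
Your central tool fails for this particular polynomial: the bivariate chromatic polynomial is \emph{invariant} under edge thickening. A DPT-coloring condition is imposed edge by edge and depends only on the colors of the two endpoints, so parallel copies of an edge impose no new constraints and $P(G_k;x,y)=P(G;x,y)$ for every $k$. You can also see this inside the $\xi$-framework you want to use: $P(G;x,y)=\xi(G;x,-1,x-y)$, i.e.\ the $P$-slice of $\xi$ sits at second argument $-1$, and under the substitution of Theorem~\ref{thm:p2p} this value is a fixed point, since $(1+y)^k-1=-1$ and $z\frac{(1+y)^k-1}{y}=z$ when $y=-1$ (this is exactly the excluded degeneracy $|1+y|=0$ in the proof of Theorem~\ref{thm:xi_hard}). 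Hence your identity $P(G_k;x,y)=\pi_k\,P(G;X_k,Y_k)$ degenerates to $\pi_k=1$, $(X_k,Y_k)=(x,y)$: the orbit consists of a single point, no interpolation is possible, and the reduction to $P(G;n,n)$ (the chromatic polynomial) never gets off the ground. Your guess that the exceptional set $\{y=0\}\cup\{(1,1),(2,2)\}$ reflects degeneracies of the thickening map is likewise off: those are points where $P$ itself is trivially computable ($P(G;x,0)=x^{|V|}$, one or two colors), not fixed points of any thickening substitution. Thickening is the right tool for Theorem~\ref{thm:xi_hard}, where the $y$-variable of $\xi$ is genuinely free; it cannot prove Theorem~\ref{thm:DPT_hard}, which is needed as an independent ingredient.

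The paper's proof uses a vertex operation instead of an edge operation, essentially the ``secondary reduction'' you only gesture at. For $\GG$ obtained from $G$ by adding one new vertex joined to all of $V$, the classical identity $P(\GG;y)=yP(G;y-1)$ for the chromatic polynomial lifts to $P(\GG;x,y)=yP(G;x-1,y-1)+(x-y)P(G;x,y)$. Iterating this moves the evaluation point along the line $x-y=d$ (the difference $x-y$ is preserved), and either reaches $(1+d,1)$ directly when $y$ is a positive integer or produces enough distinct points on that line to interpolate the restriction of $P$ to it. On the line $y=1$ the polynomial specializes to the independent set polynomial, whose evaluation is \SP-hard almost everywhere; that, not Linial's theorem for the chromatic polynomial, is the hardness source. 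If you want to salvage your plan, you would have to replace thickening by such a vertex gadget and carry out the diagonal-shift interpolation; as written, the proposal has no working reduction.
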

\begin{proof}[Proof (Sketch)]
  Given a graph $G=(V,E)$ let $\GG$ denote the graph obtained out of
  $G$ by inserting a new vertex $\vv$ and connecting $\vv$ to all
  vertices in $V$. Let $P(G;y)$ denote the chromatic polynomial
  \cite{read_chromatic}. It is well known that
  \begin{equation}
    \label{eq:chrom_p2p}
    P(\GG;y)=yP(G;y-1).
  \end{equation}
  From this and \cite[Theorem 1]{dpt_chromatic} we can derive
  \begin{equation}
    \label{eq:dpt_p2p}
    P(\GG;x,y)=yP(G;x-1,y-1)+(x-y)P(G;x,y).
  \end{equation}
  The proof of the theorem now works in the same fashion as a proof
  that $P(G;y)$ is \SP-hard to evaluate almost everywhere using
  \eqref{eq:chrom_p2p} would work: using \eqref{eq:dpt_p2p} we reduce
  along the lines $x=y+d$, which eventually enables us to evaluate $P$
  at $(1+d,1)$ (if $y$ is a positive integer, we reach $(1+d,1)$
  directly; otherwise we obtain arbitrary many points on the line
  $x=y+d$, which enables us to interpolate the polynomial on this
  line). On the line $y=1$ the polynomial $P$ equals the independent
  set polynomial \cite[Corollary 2]{dpt_chromatic}, which is \SP-hard
  to evaluate almost everywhere \cite{averbouch_makowsky,
    bh_interlace}.
\end{proof}

\begin{thm}
\label{thm:xi_hard}
For every $(x,y,z)\in \Q$, $x\neq 0$, $z\neq -xy$, $(x,z)\not \in
\{(1,0),(2,0)\}$, $y\not \in \{-2,-1,0\}$, the following statement
holds true: It is \SP-hard to compute $\xi(G;x,y,z)$ from $G$.
\end{thm}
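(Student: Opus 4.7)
The plan is to combine the thickening identity of Theorem~\ref{thm:p2p} with polynomial interpolation, in the style of the Sokal / Jaeger--Vertigan--Welsh hardness proofs for the Tutte polynomial \cite{jaeger_vertigan_welsh,sokal-2005}, in order to reduce the evaluation of $\xi(G;x_0,y_0,z_0)$ at our target point to a \SP-hard evaluation of either a univariate Tutte-type specialization or of the bivariate chromatic polynomial $P$ via Theorem~\ref{thm:DPT_hard}.

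Fix $(x_0,y_0,z_0)$ satisfying the hypotheses and set $c:=z_0/y_0$, which is well-defined because $y_0\neq 0$. Theorem~\ref{thm:p2p} gives, for every $k\geq 1$,
$$
  \xi(G_k;x_0,y_0,z_0)=\xi\bigl(G;x_0,Y_k,cY_k\bigr),\qquad Y_k:=(1+y_0)^k-1.
$$
Because $y_0\notin\{-2,-1,0\}$ we have $1+y_0\notin\{-1,0,1\}$, so the $Y_k$ take infinitely many distinct rational values as $k$ varies. Expanding the definition of $\xi$ gives
$$
  f(Y):=\xi(G;x_0,Y,cY)=\sum_{A\vduni B\subs E(G)}x_0^{k(A\uni B)-\kc(B)}\,c^{\kc(B)}\,Y^{|A|+|B|},
$$
which is a univariate polynomial in $Y$ of degree at most $|E(G)|$. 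Hence $O(|E(G)|)$ oracle calls to $\xi(\cdot;x_0,y_0,z_0)$ on the thickenings $G_1,G_2,\dots$ determine $f$ by Lagrange interpolation, and evaluating $\xi(G;x_0,Y^*,cY^*)$ at any prescribed $Y^*\in\Q$ Turing-reduces in polynomial time to evaluation at $(x_0,y_0,z_0)$.

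The argument then splits on $c$. If $c=0$ (equivalently $z_0=0$) then $f(Y)=\sum_{A\subs E(G)}x_0^{k(A)}Y^{|A|}$ is a univariate Tutte-type generating function, and the exclusions $x_0\neq 0$ and $(x_0,0)\notin\{(1,0),(2,0)\}$ place the resulting one-parameter family outside the small easy set of the Tutte polynomial, so \SP-hardness follows from the classical Tutte dichotomy \cite{jaeger_vertigan_welsh,sokal-2005}. If $c\neq 0$, we instead use the specialization of $\xi$ to the Dohmen--P\"onitz--Tittmann polynomial given in \cite{agm_most_general_edge_elimination}: we pick $Y^*$ so that $\xi(G;x_0,Y^*,cY^*)$ equals, up to an easily computable factor, $P(G;U,V)$ for a pair $(U,V)$ avoiding the exceptional set $\{V=0\}\cup\{(1,1),(2,2)\}$ of Theorem~\ref{thm:DPT_hard}, after which \SP-hardness of $P$ transfers.

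The main obstacle is this second case: one needs the exact form of the specialization $\xi\to P$ from \cite{agm_most_general_edge_elimination} and then a case analysis producing, for each allowed $(x_0,y_0,z_0)$ with $c\neq 0$, an explicit $Y^*$ whose image $(U,V)$ lies in the \SP-hard locus of $P$. The four exclusions in the theorem are precisely what is required for this analysis: $x_0\neq 0$ avoids a degenerate plane of $\xi$; $y_0\notin\{-2,-1,0\}$ guarantees the thickening step supplies infinitely many distinct interpolation points; $z_0\neq -x_0y_0$ (that is, $c\neq -x_0$) avoids the surface on which the specialization to $P$ degenerates; and $(x_0,z_0)\notin\{(1,0),(2,0)\}$ removes the two Tutte easy points that would otherwise survive in the $c=0$ subcase.
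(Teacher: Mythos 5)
Your overall strategy --- thicken, interpolate along the one-parameter family obtained by letting the second coordinate vary with $z$ proportional to $y$, then land on a \SP-hard evaluation of the Dohmen--P\"onitz--Tittmann polynomial --- is exactly the paper's strategy; the paper merely phrases the interpolation through the auxiliary polynomial $\psi$ (for which thickening fixes the third coordinate), and your $f(Y)=\xi(G;x_0,Y,cY)$ with $c=z_0/y_0$ is that same family. The thickening-plus-interpolation part of your argument (distinctness of the $Y_k$ from $y_0\notin\{-2,-1,0\}$, degree bound $|E(G)|$, polynomial-size thickenings) is correct and complete.

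The gap is at the decisive step, which you explicitly leave as an ``obstacle'' rather than carry out: you never state the specialization of $\xi$ to $P$, never exhibit $Y^*$, and never verify that the theorem's excluded set suffices --- but that verification is the whole point of the statement. The missing ingredient is Proposition~18 of \cite{agm_most_general_edge_elimination}, $P(G;x,y)=\xi(G;x,-1,x-y)$. With it the endgame is immediate and uniform in $c$: take $Y^*=-1$, so $f(-1)=\xi(G;x_0,-1,-c)=P(G;x_0,x_0+c)$; the hypothesis $z_0\neq -x_0y_0$ says precisely that the second argument $x_0+c$ is nonzero, and $(x_0,z_0)\notin\{(1,0),(2,0)\}$ says precisely that $(x_0,x_0+c)\notin\{(1,1),(2,2)\}$ (since, $y_0\neq 0$, $c=0$ iff $z_0=0$), so Theorem~\ref{thm:DPT_hard} applies. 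In particular your case split at $c=0$ is unnecessary: there $x_0+c=x_0\notin\{0,1,2\}$, so the same $P$-argument works, and the appeal to the Jaeger--Vertigan--Welsh dichotomy --- salvageable, e.g.\ via the chromatic evaluation at $Y=-1$ --- is an extra ingredient the paper does not need. So: right plan, matching the paper's, but the step that actually delivers \SP-hardness and explains the shape of the excluded set is asserted, not proved, and must be filled in by the identity $\xi(G;x,-1,x-y)=P(G;x,y)$.
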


\begin{proof}[Proof (Sketch)]
  For $x,y \in \Q$, $x, y\neq 0$ and $(x,y)\not \in \{(1,1),(2,2)\}$
  the following problem is \SP-hard by Theorem~\ref{thm:DPT_hard}:
  Given $G$, compute
  \[P(G;x,y)=\xi(G;x,-1,x-y) = \psi\Big(G;x,-1,\frac{y-x}{x}\Big),
  \]
  where the first equality is by \cite[Proposition
  18]{agm_most_general_edge_elimination} and the second by
  Lemma~\ref{lem:psi_xi}. We will argue that, for any fixed $\yy \in
  \Q\setminus \{-2,-1,0\}$, this reduces to compute
  $\psi(G;x,\yy,\frac{y-x}{x})$ from $G$. We have
  \[ \psi\Big(G;x,\yy,\frac{y-x}{x}\Big) = \xi(G;x,\yy, (y-x)\yy) \]
  by Lemma~\ref{lem:psi_xi}. An easy calculation converts the
  conditions on $x,\yy, y$ into conditions on $x, y, z$ and yields the
  statement of the theorem.

  Now assume that we are able to evaluate $\psi$ at some fixed
  $(x,y,z)\in \Q^3$, i.e.\ given $G$ we can compute $\psi(G;x,y,z)$.
  Then Theorem~\ref{thm:p2p} allows us to evaluate $\psi$ at
  $(x,y',z)$ for infinitely many different $y'=(1+y)^k-1$ provided
  that $|1+y|\neq 0$ and $|1+y|\neq 1$. As $\psi$ is a polynomial,
  this enables interpolation in $y$ and eventually gives us the
  ability to evaluate $\psi$ at $(x,y',z)$ for \emph{any}\ $y'\in \Q$.
  In particular, being able to evaluate $\psi$ at
  $(x,\yy,\frac{y-x}{x})$, $\yy\in \Q\setminus \{-2, -1, 0\}$, implies
  the ability to evaluate it at $(x, -1, \frac{y-x}{x})$.
\end{proof}

\bibliographystyle{alpha}
\bibliography{literatur}

\begin{thebibliography}{AGM07}

\bibitem[AGM07]{agm_most_general_edge_elimination}
Ilia Averbouch, Benny Godlin, and J.~A. Makowsky.
\newblock A most general edge elimination polynomial, 2007.
\newblock Preprint, arXiv:0712.3112v1 (math.CO).

\bibitem[AM07]{averbouch_makowsky}
Ilia Averbouch and J.~A. Makowsky.
\newblock The complexity of multivariate matching polynomials, February 2007.
\newblock Preprint.

\bibitem[BDM07]{bdm}
M.~Bl\"aser, H.~Dell, and J.A. Makowsky.
\newblock Complexity of the {B}ollob\'{a}s-{R}iordan polynomial: Dichotomy
  results and uniform reductions, 2007.
\newblock Preprint.

\bibitem[BH07]{bh_interlace}
Markus Bl\"aser and Christian Hoffmann.
\newblock On the complexity of the interlace polynomial, 2007.
\newblock Preprint, arXiv:cs.CC/0707.4565.

\bibitem[BM06]{blaes_mak}
Markus Bl\"aser and Johann Makowsky.
\newblock Hip hip hooray for {S}okal, 2006.
\newblock Unpublished note.

\bibitem[DPT03]{dpt_chromatic}
Klaus Dohmen, Andr{\'e} P\"onitz, and Peter Tittmann.
\newblock A new two-variable generalization of the chromatic polynomial.
\newblock {\em Discrete Mathematics {\&} Theoretical Computer Science},
  6(1):69--90, 2003.

\bibitem[JVW90]{jaeger_vertigan_welsh}
F.~Jaeger, D.~L. Vertigan, and D.~J.~A. Welsh.
\newblock On the computational complexity of the {J}ones and the {T}utte
  polynomials.
\newblock {\em Math. Proc. Cambridge Philos. Soc.}, 108:35--53, 1990.

\bibitem[Mak07]{makowsky_zoo}
J.~A. Makowsky.
\newblock From a zoo to a zoology: Towards a general theory of graph
  polynomials.
\newblock To appear in Theory of Computing Systems, 2007.
\newblock doi:10.1007/s00224-007-9022-9.

\bibitem[Rea68]{read_chromatic}
R.~C. Read.
\newblock An introduction to chromatic polynomials.
\newblock {\em J. Combin. Theory}, 4:52--71, 1968.

\bibitem[Sok05]{sokal-2005}
Alan~D. Sokal.
\newblock The multivariate {T}utte polynomial (alias {P}otts model) for graphs
  and matroids.
\newblock In Bridget~S. Webb, editor, {\em Surveys in Combinatorics 2005}.
  Cambridge University Press, 2005.

\end{thebibliography}

\end{document}